\def\final{1}  
\def\iflong{\iffalse}
\newcommand{\fvl}[1]{{\color{teal}[{ \textbf{FvL:} #1}]}} 
\newcommand{\ag}[1]{{\color{purple}[{ \textbf{AG:}  #1}]}} 
\newcommand{\ba}[1]{{\color{blue}[{\textbf{BA:} #1}]}} 
\newcommand{\mdb}[1]{{\color{orange}[{ \textbf{MdB:} #1}]}}  
\newcommand{\aanote}[1]{}
\newcommand{\fvl}[1]{}
\newcommand{\ag}[1]{}
\newcommand{\ba}[1]{}
\newcommand{\mdb}[1]{}
\newcommand{\etal}{\emph{et al.}\xspace}
\newcommand{\A}{\ensuremath{\mathcal{A}}}
\newcommand{\C}{\ensuremath{\mathcal{C}}}
\newcommand{\D}{\ensuremath{\mathcal{D}}}
\newcommand{\G}{\ensuremath{\mathcal{G}}}
\newcommand{\I}{\ensuremath{\mathcal{I}}}
\newenvironment{myquote}{\list{}{\leftmargin=5mm \rightmargin=5mm}\item[]}{\endlist}
\newcommand{\REAL}{\ensuremath{\mathbb{R}}}
\newcommand{\Reals}{\REAL}
\renewcommand{\leq}{\leqslant}
\renewcommand{\geq}{\geqslant}
\newcommand{\bd}{\partial}
\DeclareMathOperator{\diam}{diam}
\DeclareMathOperator{\ply}{ply}
\newcommand{\graph}{\G}
\newcommand{\dg}{\graph^*}
\newcommand{\mf}{\mu}
\newcommand{\myin}{\mathrm{in}}
\newcommand{\myout}{\mathrm{out}}
\newcommand{\pa}{\mathit{parent}}
\newcommand{\bydef}{\coloneq}
\title{On the Diameter of Arrangements of Topological Disks}
\author{Aida {Abiad}}{Department of Mathematics and Computer Science,  TU Eindhoven, the Netherlands \and  Department of Mathematics and Data Science, Vrije Universiteit Brussel, Belgium}{a.abiad.monge@tue.nl}{https://orcid.org/0000-0003-4003-4291}{Supported by the Dutch Research Council (NWO) through the grants VI.Vidi.213.085 and OCENW.KLEIN.475.}
\author{Boris Aronov}{Department of Computer Science and Engineering, Tandon School of Engineering, New York University, Brooklyn, NY, USA}{boris.aronov@nyu.edu}{http://orcid.org/0000-0003-3110-4702}{Partially supported by NSF Grant CCF-20-08551.}
\author{Mark de Berg}{Department of Mathematics and Computer Science, TU Eindhoven, the Netherlands}{M.T.d.Berg@tue.nl}{https://orcid.org/0000-0001-5770-3784}{Supported by the  Dutch Research Council (NWO) through Gravitation-grant NETWORKS-024.002.003.}
\author{Julian Golak}{Institute of Sustainable Logistics and Mobility and Institute of Operations Management, University of Hamburg, Hamburg, Germany}{julian.golak@uni-hamburg.de}{https://orcid.org/0000-0003-2626-5819}{}
\author{Alexander Grigoriev}{Department of Data Analytics and Digitalisation, Maastricht University, the Netherlands}{a.grigoriev@maastrichtuniversity.nl}{https://orcid.org/0000-0002-8391-235X}{}
\author{Freija {van Lent}}{Department of Data Analytics and Digitalisation, Maastricht University, the Netherlands}{f.vanlent@maastrichtuniversity.nl}{https://orcid.org/0000-0002-6417-9199}{}
\authorrunning{A. Abiad, B. Aronov, M. de Berg, J. Golak, A. Grigoriev, and F. van Lent} 
\keywords{Topological disk, arrangement, face, Jordan curve, crossing} 
\begin{document}

\maketitle

\begin{abstract}
Let $\D=\{D_0,\ldots,D_{n-1}\}$ be a set of $n$ topological disks in the plane
and let $\A := \A(\D)$ be the arrangement induced by~$\D$. 
For two disks $D_i,D_j\in\D$, let $\Delta_{ij}$ be the number of connected components of~$D_i\cap D_j$, and let $\Delta := \max_{i,j} \Delta_{ij}$. 
We show that the diameter of $\dg$, the dual graph of~$\A$, can be bounded as a function of $n$ and $\Delta$. 
Thus, any two points in the plane can be connected by a Jordan curve that crosses the disk boundaries a number of times bounded by a function of~$n$ and~$\Delta$.
In particular, for the case of two disks, we prove that the diameter of $\dg$ is at most $\max\{2,2\Delta\}$ and this bound is tight. 
For the general case of $n>2$ disks, we show that the diameter of $\dg$ is $O(n^3 2^n \Delta)$.
We achieve this by proving that the number of maximal faces in $\A$---faces whose ply is more than the ply of their neighboring faces---is $O(n^2 2^n \Delta)$. 
To this end, we first show that the number of maximum faces---faces whose ply is~$n$---is $O(n^2\Delta)$;
the latter bound, which is of independent interest, is tight in the worst case.
\end{abstract}

\section{Introduction}
\label{sec:introduction}
The \emph{arrangement} $\A(S)$ defined by a set~$S$ of objects in the plane is the subdivision of the plane into faces, edges, and vertices induced by (the boundaries of) the objects.
Arrangements in the plane and in higher dimensions play a fundamental role in computational geometry, since many geometric problems can be transformed into problems on arrangements. 
As a result, the \emph{combinatorial complexity} of arrangements---the total number of vertices, edges, and faces (and higher-dimensional features)---induced by various types of objects has been studied extensively. 
These studies concern the complexity of complete arrangements as well as the complexity of various substructures: single cells, many cells, zones, levels, and so on; see the book by Sharir and Agarwal~\cite{DBLP:books/daglib/0080837} or the surveys by Agarwal and Sharir~\cite{DBLP:books/el/00/AgarwalS00a} or Halperin and Sharir~\cite{DBLP:reference/cg/arrangements}.  
The known results on arrangements typically deal with constant-complexity objects such as lines, circles, or other objects whose boundaries intersect a bounded number of times. This is not surprising, as the complexity of the arrangement induced by some set of objects obviously cannot be bounded if we do not put a bound on the number of intersections between the boundaries of any two objects in the set. 
In this paper we are interested in whether anything meaningful can be said about complexity of an arrangement if the number of boundary intersections of any two objects in the arrangement is not necessarily bounded. 
\medskip

Let $\D=\{D_0,\ldots,D_{n-1}\}$ be a set of $n$ topological disks in $\Reals^2$. 
To simplify the presentation we will refer to the objects in $\D$ as \emph{disks}, but the reader should keep on mind that the boundaries of the disks in $\D$ are arbitrary closed Jordan curves. 
In particular, the boundaries $\bd D_i$ and $\bd D_j$ of two disks $D_i,D_j\in\D$ can intersect arbitrarily many times.
We will assume that the disks in $\D$ are in \emph{general position}. More precisely, we assume that $\bd D_i\cap \bd D_j$ consists of finitely many points, that two boundaries properly cross every time they intersect, i.e., there are no tangencies, and that no three disk boundaries have a point in common.
\medskip

Let $\A:=\A(\D)$ be the arrangement induced by~$\D$.  
Let $\Delta_{ij}$ denote the number of connected components of the intersection $D_i\cap D_j$ of a pair of objects~$D_i,D_j\in \D$, and define $\Delta := \max_{i,j}\Delta_{ij}$. 
We refer to $\Delta$ as the \emph{overlap number} of~$\D$.
Note that the complexity of $\A$ cannot be bounded as a function of $n$ and~$\Delta$. 
Indeed, even for $n=2$ and $\Delta=1$, the number of faces in $\A$ can be arbitrarily large, as illustrated in Figure~\ref{fig:terminology}(i).
\begin{figure}
\begin{center}
\includegraphics{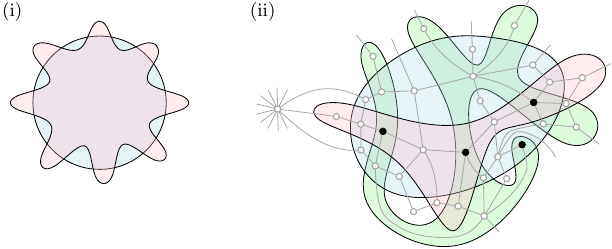}
\end{center}
\caption{(i) A set of two topological disks with $\Delta=1$ can induce an arrangement with arbitrarily many faces.
         (ii) A set of three topological disks and the dual graph of their arrangement.  The arrangement
         has four maximal faces, whose dual nodes are shown in black.
         The edges incident to the node corresponding to the unbounded face are
         only drawn partially, to avoid cluttering the figure. 
         If the blue disk is $D_0$, the red disk is $D_1$, and the green disk is $D_2$,
         then $\Delta_{01}=1$ and $\Delta_{02}=\Delta_{12}=3$, so $\Delta=3$.}
\label{fig:terminology}
\end{figure}
But are there perhaps other quantities describing how complex the arrangement~$\A$ can be, yet be bounded in terms of $n$ and $\Delta$? 
\medskip

The first quantity we will study is the diameter of the dual graph of the arrangement. 
The \emph{dual graph} of the arrangement $\A$ is the graph~$\dg:= \dg(\A)$ whose nodes correspond to the faces in~$\A$ and two nodes in $\dg$ are adjacent if and only if the corresponding faces have at least one edge
in common; see Fig.~\ref{fig:terminology}(ii). 
The \emph{diameter} of $\dg$ is the maximum hop-distance between any two nodes in~$\dg$.
\medskip
 
The second quantity of interest is the number of \emph{maximal faces} in~$\A$. 
Let \emph{ply} of a face in $\A$ be the number of disks in $\D$ that contain the face.
Two faces are called \emph{neighbors} in $\A$ if they have at least one common edge,
if their corresponding nodes in $\dg$ are adjacent.
A face is called \emph{maximal} if its ply is larger than the ply of any of its neighboring faces.
A special case of a maximal face is a \emph{maximum} face, which is a face whose ply is~$n$
or, in other words, a face that is contained in all disks in~$\D$.
\medskip

\subparagraph{Our results.}
Let $\diam(\G)$ denote the diameter of a graph $\G$, 
and let $\mf(\A)$ and $M(\A)$ denote the number of maximal and maximum
faces, respectively, of the arrangement~$\A$.
Furthermore, define
\begin{align*}
\xi(n,\Delta) &\bydef \max_\D \{ \diam(\dg(\A(\D)) \}
\shortintertext{and}
\mu(n,\Delta) &\bydef \max_\D \{ \mf(\A(\D))\}
\shortintertext{and}
M(n,\Delta) &\bydef \max_\D \{ M(\A(\D))\},
\end{align*}
with the maximum taken over all sets $\D$ of $n$ topological disks with overlap number~$\Delta$ in general position.
We first consider the case of two disks. Bounding $\mf(\A)$ is trivial for $n=2$, since $\mf(\A)=\Delta$ 
if $D_0$ and $D_1$ intersect, and $\mf(\A)=2$ if $D_0$ and $D_1$ are disjoint. Thus, $\mu(2,\Delta)=\max\{2,\Delta\}$
and $M(2,\Delta) = \Delta$.
Bounding $\xi(2,\Delta)$ is less straightforward. We study this problem in Section~\ref{sec:two-disks},
where we prove the following tight bound on the maximum possible diameter of~$\dg(\A(\D))$ in terms of~$\Delta$.
\begin{restatable}{theorem}{twodisks} \label{thm:2disks} 
$\xi(2,\Delta) = \max\{2,2\Delta\}$.
\end{restatable}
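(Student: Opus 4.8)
The plan is to exploit the fact that, for two disks, the ply of a face lies in $\{0,1,2\}$ and changes by exactly $\pm 1$ across every edge of~$\A$. Hence $\dg$ is graded by ply, every face is a single connected component of one of the four sign-regions $\Reals^2\setminus(D_0\cup D_1)$, $D_0\setminus D_1$, $D_1\setminus D_0$, $D_0\cap D_1$, and each ply-$1$ face is adjacent only to faces of ply $0$ or~$2$; call the latter \emph{even} faces. First I would dispose of the disjoint case $\Delta=0$, where $\A$ has the three faces $\Reals^2\setminus(D_0\cup D_1)$, $D_0$, $D_1$ and $\diam(\dg)=2$. For $\Delta\geq 1$ I would record two counts obtained from the Euler characteristic: since each component of $D_0\cap D_1$ is a simply connected face, $\chi(D_0\cup D_1)=2-\Delta$, so the connected region $D_0\cup D_1$ has exactly $\Delta-1$ holes; together with the unbounded face this yields exactly $\Delta$ faces of ply $0$ and, by definition, exactly $\Delta$ faces of ply~$2$. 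A short topological argument then shows that every ply-$1$ face borders at least one face of ply $0$ and at least one of ply $2$ (its boundary cannot consist of arcs of a single $\partial D_i$, since $D_0,D_1$ are disks).

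The heart of the proof, and what I expect to be the main obstacle, is the following structural claim: the $2\Delta$ even faces can be arranged in a cyclic sequence that alternates ply $0$ and ply $2$ and in which any two consecutive even faces share a common ply-$1$ neighbour (so they lie at distance $2$ in~$\dg$). Intuitively the ply-$1$ region $X:=D_0\triangle D_1$ is bounded on the outside by the $\Delta$ boundary curves of the ply-$0$ faces and on the inside by the $\Delta$ boundary curves of the ply-$2$ faces, and $X$ links these $2\Delta$ curves into a single alternating cycle. I would prove this by induction on~$\Delta$: locate an ``extremal'' component $T$ of $D_0\cap D_1$, remove it by a homotopy of $\partial D_0$ (or $\partial D_1$) across $T$ that decreases $\Delta$ by one and merges the two even faces flanking $T$, apply the inductive cyclic order to the smaller arrangement, and reinsert $T$ together with its partner ply-$0$ face as two consecutive vertices of the cycle. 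Verifying that such an extremal component and homotopy always exist, and that the reinsertion respects the alternation, is the delicate part and will rely on the Jordan curve theorem and the planarity of~$\A$.

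Granting this cyclic lemma, the upper bound $\diam(\dg)\leq 2\Delta$ follows by routing along the alternating $2\Delta$-cycle $C=e_0e_1\cdots e_{2\Delta-1}$. Two even faces $e_i,e_j$ are joined by a path of length twice their cyclic distance, hence at most $2\Delta$. For a ply-$1$ face $g$, its ply-$0$ neighbour and its ply-$2$ neighbour occupy opposite-parity positions of~$C$, so at most one of them can be antipodal to a given target face; choosing the nearer neighbour shaves the cyclic distance by one. Carrying this through the three cases gives distances at most $2\Delta$ (even--even), $2\Delta-1$ (even--ply-$1$), and $2\Delta$ (ply-$1$--ply-$1$), so $\diam(\dg)\leq 2\Delta$.

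For the matching lower bound I would exhibit an explicit \emph{chain of lenses}: two thin wavy disks whose boundaries cross $2\Delta$ times, producing $\Delta$ lens-shaped ply-$2$ faces in a row, the $\Delta-1$ ply-$0$ holes forced between consecutive lenses, and the unbounded face wrapping around the ends. Here the alternating cycle $C$ is realised without any chords, and every remaining (ply-$1$) face hangs off a single edge of $C$, so the unbounded face and the even face antipodal to it on $C$ are at distance exactly $2\Delta$. This gives $\xi(2,\Delta)\geq 2\Delta$, and together with the upper bound and the $\Delta=0$ case yields $\xi(2,\Delta)=\max\{2,2\Delta\}$.
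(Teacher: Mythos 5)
Your lower bound construction does not work. In a linear ``chain of lenses'' formed by two thin wavy disks laid out in a row, every ply-$1$ face (each piece of $D_0\setminus D_1$ and of $D_1\setminus D_0$ between consecutive lenses) has part of its boundary on the outer boundary of $D_0\cup D_1$, and is therefore adjacent to the unbounded face. So your claim that ``every remaining (ply-$1$) face hangs off a single edge of $C$'' is false: the unbounded face is a hub adjacent to essentially all ply-$1$ faces, every face of the arrangement is within two hops of it, and the diameter of $\dg$ stays bounded by a constant (about $4$) no matter how large $\Delta$ is. To force diameter $2\Delta$ you need the even faces to be \emph{nested}, so that the unbounded face touches only the outermost ply-$1$ faces; this is exactly why the paper's tight example uses two interleaved spirals, where one must cross $\Delta-1$ concentric ``rings'' to get from the innermost to the outermost face.

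The upper bound also has a genuine gap: everything rests on the claim that the $2\Delta$ even faces admit an alternating Hamiltonian cycle in the distance-$2$ graph, and you neither prove this nor is it at all obvious. It is a much stronger structural statement than what is actually needed, and your inductive sketch does not pin it down: a component $T$ of $D_0\cap D_1$ is not ``flanked'' by two well-defined even faces (its boundary can meet arbitrarily many white faces through arbitrarily many corners), so the merge-and-reinsert step is not defined, and it is unclear that the proposed homotopy of $\partial D_0$ across $T$ leaves the rest of the arrangement, or the cyclic order, intact. The paper gets the same bound from two far weaker facts: (i) a \emph{shortest} path between two ply-$2$ faces that avoids ply-$0$ faces must alternate between ply-$2$ and ply-$1$ nodes and hence has length at most $2\Delta-2$, and (ii) for $\Delta\geq 2$ every ply-$0$ face has \emph{two distinct} ply-$2$ faces at distance $2$ (a parity argument then absorbs the extra $+2$). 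I would either prove your cyclic lemma in full --- which looks harder than the theorem itself --- or switch to an argument of the paper's type that only needs local adjacency information.
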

In Section~\ref{sec:general} we then turn our attention to the much more 
challenging case of general~$n$. Here, we first prove the following
upper bound on the number of maximum faces.
\begin{restatable}{theorem}{maximumFaces} \label{thm:maximum-faces} 
For any $n>2$, we have $M(n,\Delta) < 2n(n-1)\Delta$.
\end{restatable}
We also give a construction showing that $M(n,\Delta) =\Omega(n^2\Delta)$,
showing that the bound from \autoref{thm:maximum-faces} is asymptotically tight.
This result implies the following bound on the number of maximal faces.
\begin{restatable}{theorem}{mfgeneral} \label{thm:mf-general} 
For any $n>2$, we have $\mf(n,\Delta) =O(n^2 2^n \Delta)$.
\end{restatable}
Finally, we show that the diameter of $\dg$ is bounded in terms of ~$n$ and $\Delta$.
\begin{restatable}{theorem}{diamgeneral} \label{thm:diam-general} 
For any $n>2$, we have $\xi(n,\Delta) =O(n^3 2^n \Delta)$.
\end{restatable}
\subparagraph{Related work.}
The problem of bounding the diameter of $\dg$ is equivalent to the following question.
For two points $s,t\in \Reals^2$ not lying on the boundary of any disk in $\D$, let $d(s,t)$ be the minimum number of times one needs to cross a disk boundary when going from $s$ to~$t$. 
Then, the question is: what is the maximum value of $d(s,t)$ over all such pairs~$s$ and $ t$?
This is related to the concept of \emph{barrier coverage} in sensor networks, which aims to ensure that all possible paths through a surveillance domain will intersect the boundary of a sensor region at least once.  
Cardei and Wu~\cite{cardei2004coverage} and Meguerdichian~\etal~\cite{meguerdichian2001coverage}  provide comprehensive overviews of work on this topic. 
The notion of barrier coverage closest to our work is the one introduced by Bereg and Kirkpatrick \cite{bereg2009approximating}, who define the \emph{thickness} of a barrier as the minimum number of sensor region intersections of a path between any two regions. 
A substantial difference between the paper by Bereg and Kirkpatrick and our paper is the way of counting the number of crossings of an $st$-path  with the region boundaries:
they count a boundary crossing only if the curve enters a region for the first time, 
while we count every boundary crossing. 
Cabello~\etal~\cite{DBLP:journals/ijcga/AltCGK17} study the related problem of
computing, for a given set of line segments and two points $s$ and $t$, the minimum
number of segments crossed by an $st$-path, but they also count each segment at most once. 

\subparagraph{Notation.}
In the following, $\D=\{D_0,\ldots,D_{n-1}\}$ always refers to a set of $n$ topological disks 
in general position. Furthermore, $\A$ refers to the arrangement induced by~$\D$, 
and $\dg$ to the dual graph of~$\A$. 
For two disks $D_i,D_j\in \D$, $\C(D_i,D_j)$ denotes the set of connected components of $D_i\cap D_j$. 
Recall that $\Delta_{ij} := |\C(D_i,D_j)|$ denotes the number of such connected components, and that the \emph{overlap number} of $\D$ is defined as $\Delta := \max_{ij}\Delta_{ij}$.
For two nodes $u$ and $v$ in the dual graph $\dg$ of the arrangement~$\A$, we use $d(u,v)$ to denote the distance between $u$ and $v$ in~$\dg$, that is, the minimum number of edges on any path from $u$ to $v$ in~$\dg$. 
When we speak of the \emph{hop-distance} between two faces $f,f'$ in the arrangement~$\A$, we refer to the distance between the nodes in $\dg$ that correspond to $f$ and~$f'$. 
With a slight abuse of notation, we denote this hop-distance by $d(f,f')$.


\section{The case of two disks}
\label{sec:two-disks}
\fvl{ISAAC: Add more illustrations for the n > 2 case}
In this section we will prove Theorem~\ref{thm:2disks}. 
We first prove the upper bound --- that is, we show that the diameter of the dual graph~$\dg$ of an arrangement of two topological disks is at most $\max\{2,2\Delta\}$ --- and then, we provide an example showing that this bound is tight.

\subparagraph{The upper bound.}
It is easy to verify that $\diam(\dg)=2$ when $\Delta=0$ or $\Delta=1$, so from now on we assume that $\Delta\geq 2$. 
Let $D_0$ and $D_1$ be the two disks, and imagine that $D_0$ is colored red and $D_1$ is colored blue.
We color the faces of $\A(\D)$ accordingly: faces contained in $D_0\setminus D_1$ are red,  faces in $D_1\setminus D_0$ are blue, faces in $D_0\cap D_1$ are purple, and faces in $\Reals^2 \setminus (D_0\cup D_1)$ are white; see Fig.~\ref{fig:dual-of-two-disks}.
Similarly, we color the nodes of $\dg$ in the colors of their respective faces.
Observe that any edge in $\dg$ is either between a white node and a red or blue node, or between a purple node and a red or blue node; there are neither edges between white and purple nodes, nor edges between red and blue nodes.
Also observe that the number of purple nodes is, by definition, exactly~$\Delta$.
We call a path in $\dg$ a \emph{colored path} if it only uses non-white nodes.
\begin{lemma}\label{lem:purple-to-purple}
For any two purple 
nodes~$u,v$ in $\dg$ there is a colored path from $u$ to $v$ of length at most $2\Delta-2$.    
\end{lemma}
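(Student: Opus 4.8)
The plan is to reduce the claim to a statement about a single disk. Fix the blue disk $D_1$ and analyze how the red boundary $\partial D_0$ subdivides it. Since $\partial D_0$ is a single Jordan curve, the set $\partial D_0 \cap \mathrm{int}(D_1)$ consists of pairwise disjoint arcs, each of which enters and leaves $D_1$ through $\partial D_1$; that is, they behave as non-crossing chords of the topological disk $D_1$. I would first argue that the faces of $\A$ lying inside $D_1$ are exactly the regions into which these chords cut $D_1$: the relative interior of such a region meets neither $\partial D_0$ nor $\partial D_1$, so each region is a single face. Each region lies either entirely inside $D_0$ (in which case it is purple) or entirely outside $D_0$ (in which case it is blue), and the purple regions are precisely the $\Delta$ connected components of $D_0 \cap D_1$.

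Next I would establish the combinatorial structure of this subdivision. Building up the chords one at a time, each new chord splits a single existing region into two, adding exactly one node and exactly one adjacency while preserving all previous ones; hence the dual graph $T$ of the subdivision of $D_1$ is a tree (with $m+1$ nodes when there are $m$ chords). Crossing any chord switches between the inside and the outside of $D_0$, so the two faces sharing a chord receive opposite colors. Thus $T$ is properly $2$-colored with colors purple and blue, and all $\Delta$ purple faces are nodes of $T$. I also need that $T$ is a subgraph of $\dg$: two regions sharing a chord share an arc of $\partial D_0$, which is an edge of $\A$ (its relative interior avoids $\partial D_1$ and, being part of a Jordan curve, avoids $\partial D_0$ as well), so the corresponding faces are adjacent in $\dg$.

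Finally I would bound the distance. For two purple nodes $u,v$, take the unique path between them in the tree $T$. Because $T$ is properly $2$-colored, the colors along this path alternate purple, blue, purple, $\ldots$, ending at the purple node $v$; in particular the path uses only nodes inside $D_1$ and so is a \emph{colored} path containing no white node. Its purple nodes are pairwise distinct, and there are only $\Delta$ purple nodes in total, so a purple/blue-alternating simple path contains at most $\Delta$ purple nodes and therefore has length at most $2(\Delta-1)=2\Delta-2$, as required.

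The main obstacle is the topological bookkeeping of the first two steps rather than the final counting: one must verify carefully that the regions cut out of $D_1$ by $\partial D_0$ coincide with the faces of $\A$ inside $D_1$, that their dual adjacency graph is genuinely a tree, and that the purple regions are exactly the connected components of $D_0 \cap D_1$. Once these structural facts are in place, the length bound follows immediately from the alternating $2$-coloring of a tree.
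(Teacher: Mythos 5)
Your proof is correct, but it reaches the alternating-path bound by a genuinely different construction than the paper. The paper first gets \emph{some} colored path by joining $f_u$ to $f_v$ with a curve inside $D_0$ (so the intermediate faces are red or purple), then takes a \emph{shortest} colored path and observes that, since every edge of $\dg$ joins a white-or-purple node to a red-or-blue node, a shortest colored path must alternate between purple nodes and red-or-blue nodes; as a simple path it visits at most $\Delta$ purple nodes, giving length at most $2\Delta-2$. You instead work entirely inside $D_1$: the arcs of $\partial D_0$ interior to $D_1$ are non-crossing chords cutting $D_1$ into regions whose adjacency graph is a tree, properly $2$-colored purple/blue, and the unique tree path between two purple nodes is your alternating colored path. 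The final count is identical in both arguments; what differs is how the simple alternating path is produced. Your route costs the topological bookkeeping you acknowledge (chord regions coincide with the faces of $\A$ inside $D_1$, the dual of a non-crossing chord decomposition is a tree, tree edges are edges of $\dg$) --- all of which checks out --- whereas the paper's ``take a shortest colored path'' argument sidesteps it with one observation about which color pairs can be adjacent. In exchange you get a slightly stronger structural fact: a spanning tree of the blue-and-purple part of $\dg$ in which purple nodes are pairwise at even distance. One small caveat: your chord picture tacitly assumes $\partial D_0$ actually meets $\partial D_1$; if the boundaries are disjoint there is at most one purple node and the lemma is vacuous, so nothing is lost, but it deserves a sentence.
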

\begin{proof}
First, notice that a colored path from $u$ to $v$ always exists since $u$ and $v$ are purple. 
Indeed, the corresponding faces~$f_u$ and $f_v$ lie in $D_0\cap D_1$, so there is a curve $\gamma\subset D_0$ connecting $f_u$ and $f_v$ inside $D_0$. This curve corresponds to a colored path in~$\dg$.

Let $\pi(u,v)$ be a shortest colored path from $u$ to $v$  in~$\dg$. 
Since $\pi(u,v)$ is a shortest path, any node occurs at most once on it.
Moreover, $\pi(u,v)$ does not have white nodes, so it alternates between purple nodes and red or blue nodes. 
Since $\pi(u,v)$ starts and ends at a purple node and it contains at most $\Delta$ purple nodes, $\pi(u,v)$ has at most $2\Delta-2$ edges.
\end{proof}
\begin{figure}
\begin{center}
\includegraphics{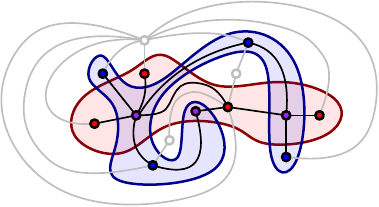}
\end{center}
\caption{The dual graph of an arrangement of two disks, with white, blue, red, and purple nodes.}
\label{fig:dual-of-two-disks}
\end{figure}
Since we assumed $\Delta > 0$, any red or blue node has a purple neighbor. 
By Lemma~\ref{lem:purple-to-purple} the distance between any two purple nodes is at most $2\Delta-2$ that implies $d(u,v)\leq 2\Delta$ for any two non-white nodes~$u$ and $v$.
Thus, to prove that $\diam(\dg)\leq 2\Delta$, it remains to consider the case where at least one of the nodes~$u,v$ is white. 
Observe that any white node has a red neighbor (and a blue neighbor as well), and so, it can reach a purple node in two steps. 
This implies that $d(u,v)\leq 2\Delta+2$ for any two nodes~$u,v$: we can reach a purple node $w_1$ from $u$ in at most two steps, we can reach a purple node $w_2$ from $v$ in at most two steps, and $d(w_1,w_2)\leq 2\Delta-2$ by Lemma~\ref{lem:purple-to-purple}. 
Next we show that $d(u,v)\leq 2\Delta$ even if one or both of the nodes~$u,v$ are white. 
The improvement is based on the following lemma.
\begin{lemma} \label{lem:white-to-purple}
Let $\Delta\geq 2$ and let $u$ be a white node in $\dg$. 
Then, there are two distinct purple nodes $w,w'$ with $d(u,w)=d(u,w')=2$. 
\end{lemma}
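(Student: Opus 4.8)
The plan is to prove something slightly stronger than asked: I would exhibit a single red (or blue) face adjacent to $f_u$ that already touches two \emph{different} purple faces. Since a white face and a purple face never share an edge, any purple face reachable from $u$ by a $2$-edge path is automatically at distance exactly $2$, so two distinct such purple faces give the two nodes $w,w'$ required by the statement.

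First I would record two easy facts. Since $\Delta\geq 2$ the curves $\bd D_0$ and $\bd D_1$ cross, so no face is bounded by an uncrossed closed curve and every face has at least one vertex on its boundary; at such a vertex the two curves cross transversally and the wedge occupied by $f_u$ is bounded by one $\bd D_0$-arc and one $\bd D_1$-arc. Hence $\bd f_u$ contains an arc $a\subseteq\bd D_0$, and crossing $a$ leads into a red face $r$ adjacent to $f_u$. (Tracking membership in $D_0$ and in $D_1$ around the vertex also shows that the four incident faces are one white, one red, one purple and one blue, which already yields one purple face at distance $2$; the work is to get a second, distinct one.)

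The structural heart is the arrangement inside the red disk. The components of $\bd D_1\cap\mathrm{int}(D_0)$ are arcs with both endpoints on $\bd D_0$, and because $\bd D_1$ is a single Jordan curve they are pairwise non-crossing; they therefore cut $D_0$ into simply connected regions whose adjacency graph $T$ is a tree. Flipping membership in $D_1$ across each arc $2$-colours $T$ into red and purple regions, with exactly $\Delta\geq 2$ purple nodes. Two observations drive the argument. Since each region is simply connected its boundary is one cycle that alternates between $\bd D_0$-arcs and $\bd D_1$-arcs, so a red face carrying only one $\bd D_1$-arc is a \emph{bigon} (one arc of each curve); and since $T$ is a simple tree, a red face with two $\bd D_1$-arcs has two \emph{distinct} purple neighbours. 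Thus if the red neighbour $r$ of $f_u$ is not a bigon it has distinct purple neighbours $P,P'$, and the paths $f_u\to r\to P$ and $f_u\to r\to P'$ finish the proof.

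The only remaining case, and the main obstacle, is that \emph{every} red neighbour of $f_u$ is a bigon; I would rule this out by a global rather than local argument. Let $H$ be the union of $f_u$ with all of its red neighbours. Gluing each bigon to $f_u$ along their shared $\bd D_0$-arc turns every $\bd D_0$-arc of $f_u$ into an interior arc, and a bigon contributes only its single $\bd D_1$-arc, so $\bd H\subseteq\bd D_1$. As $H$ lies in $\Reals^2\setminus\mathrm{int}(D_1)$, which is connected with boundary exactly $\bd D_1$, having all of $\bd H$ on $\bd D_1$ forces $H=\Reals^2\setminus\mathrm{int}(D_1)$; consequently $f_u$ is the only white face and every red face is one of these bigons, i.e.\ every red node of $T$ is a leaf. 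But a tree whose red nodes are all leaves cannot join two purple nodes by a path, so it has a single purple node and $\Delta=1$, contradicting $\Delta\geq 2$. Hence some red neighbour of $f_u$ is not a bigon and the previous paragraph applies. I expect the two points needing the most care to be the clean justification that the chords give a tree of simply connected regions, and the topological step $\bd H\subseteq\bd D_1\Rightarrow H=\Reals^2\setminus\mathrm{int}(D_1)$; the rest is bookkeeping on the four face colours.
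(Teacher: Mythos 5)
Your proof is correct, and it takes a genuinely different route from the paper's. The paper splits into two cases according to whether $f_u$ is bounded or unbounded, and in each case runs a bespoke topological argument: for bounded $f_u$ it assumes all purple faces at the vertices of $f_u$ coincide and builds a closed curve (an edge of $\bd f_u$ plus two arcs inside that purple face) that would enclose $f_u$ inside $D_0$ or $D_1$; for the unbounded case it locates a distant purple face and extracts two distinct purple faces at the endpoints of a suitable arc of $\bd D_1$. You avoid the case split entirely by exploiting the tree $T$ of regions into which the chords $\bd D_1\cap \mathrm{int}(D_0)$ cut $D_0$: the lemma reduces to showing that some red neighbour of $f_u$ carries at least two chords, and the complementary ``all red neighbours are bigons'' scenario is killed by the clopen-in-$\Reals^2\setminus\mathrm{int}(D_1)$ argument, which forces $\Delta=1$. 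This buys you a uniform argument and a slightly stronger conclusion (both purple faces are reachable through a \emph{single} common red neighbour), at the price of having to justify two structural facts the paper never needs: that the chord decomposition is a tree of simply connected regions with exactly $\Delta$ purple nodes, and that $\bd H\subseteq\bd D_1$ forces $H=\Reals^2\setminus\mathrm{int}(D_1)$. You correctly flag both; for the latter, note that the bare implication is false for lower-dimensional $H$ (e.g.\ a segment on the boundary), so in the write-up you must use that $H$ is a union of closed faces and check, at edge-interior points and at vertices, that the whole local ``outside-$D_1$'' side lies in $H$ — this does go through, but it is where the real work hides.
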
\fvl{ISAAC: The unbounded case does not need to be separate. You can simply consider an alternative embedding where that face is not the unbounded face. 
		Think of placing the arrangement on one side of a sphere, and then expanding out an arbitrary face so that the rest is now on the opposite side.
}
\begin{proof}
Let $u$ be a white node in~$\dg$ and $f_u$ its corresponding face in~$\A$.
There are two cases to consider.
\\[2mm]
\emph{Case~I: $f_u$ is a bounded face.} 
Let $v_1,\ldots,v_{t}$ be the vertices of $\A$ incident to $f_u$, enumerated in order along $\bd f_u$.
Suppose for a contradiction that all purple faces incident to the vertices of~$f_u$ are one and the same purple face $f$. Note that $d(f,f_u)=2$. 
Let $x$ be a point in the interior of~$f$. Draw for each vertex~$v_i$ a curve $\gamma_i \subset f$ from $x$ to~$v_i$; see Fig.~\ref{fig:connect}(i).
It will be convenient to draw the curves such that they are pairwise disjoint except at the shared endpoint~$x$.
\begin{figure}
\begin{center}
\includegraphics{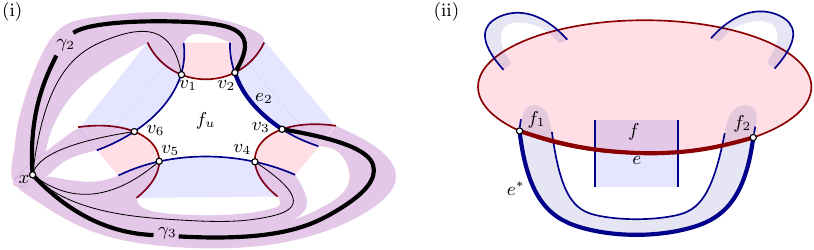}
\end{center}
\caption{(i) The edge $e_2=v_2 v_3$ and the curves $\gamma_2$ and $\gamma_3$ form a closed curve
enclosing $f_u$. (ii) If the purple faces $f_1$ and $f_2$ are the same face, 
there is a curve $\gamma\subset f_1$ connecting the endpoints of $e^*$.}
\label{fig:connect}
\end{figure}
Then, there must be an edge $e_i=v_i v_{i+1}$, with indices taken modulo~$t$, that, together with the two curves $\gamma_i,\gamma_{i+1}$, forms a closed curve $\Gamma$ enclosing~$f_u$.
To see this, imagine adding the curves $\gamma_1,\ldots,\gamma_t$ one by one and consider the arrangement defined by $\bd f$ (which is a simple closed curve) and the added curves~$\gamma_i$. 
It is easy to argue by induction that the unbounded face of this arrangement contains an edge $e\subset \bd f_u$. 
This edge is the edge $e_i$ with the required properties.
If $e_i\subset \bd D_0$ then $\Gamma \subset D_0$, and if $e_i\subset \bd D_1$ then $\Gamma \subset D_1$, both contradicting that $f_u$ is a white face.
\\[2mm] 
\emph{Case~II: $f_u$ is the unbounded face.} 
In this case, we can assume there is a purple face $f$ with $d(f,f_u)>2$, otherwise we are done. 
Pick any edge $e\subset \bd D_0$ of $f$.  
Note that $e$ is not an edge of $f_u$, since $f$ is purple and $f_u$ is white.  
Now consider $\left( \Reals^2 \setminus f_u \right) \setminus D_0$.
This region consists of several components, one for each edge of $f_u$ contributed by~$\bd D_1$. 
Note that $e$ is contained in the boundary of one of these components, which we denote by $C$.
Let $e^* := \bd C \cap \bd D_1$
; see Fig.~\ref{fig:connect}(ii).
Let $f_1$ and $f_2$ be the two purple faces incident to the endpoints of $e^*$.
Then $f_1$ and $f_2$ must be distinct purple faces, thus proving the claim. 
Indeed, if $f_1=f_2$ then there is a curve $\gamma\subset f_1$ connecting the endpoints of $e^*$ such that $\gamma\cup e^*$ is a closed curve contained in $D_1$ and enclosing~$e$, contradicting that $f\neq f_1,f_2$, while we must have $f\neq f_1,f_2$ since $d(f,f_u)>2$.
\end{proof}
With Lemmas~\ref{lem:purple-to-purple} and~\ref{lem:white-to-purple} available, we now argue that $d(u,v)\leq 2\Delta$ for any two nodes $u,v$ in~$\dg$.
We already proved this when both $u$ and $v$ are non-white. 
Now assume that $u$ is a white node. 
Let $d_{\max}$ be the maximum distance between any two purple nodes. 
We have two cases.
\begin{itemize}
\item If $d_{\max}<2\Delta-2$, then $d_{\max}\leq 2\Delta-4$. Indeed, any path in $\dg$ alternates between a purple or white node and a blue or red node. So, any path between two purple nodes has even length. Since both $u$ and $v$ can reach a purple node in at most two steps, we have $d(u,v)\leq d_{\max}+2+2\leq 2\Delta$. 
\item If $d_{\max}=2\Delta-2$, by Lemma~\ref{lem:purple-to-purple} there are two purple nodes $w_1,w_2$ whose shortest colored path $\pi(w_1,w_2)$ has length~$2\Delta-2$ and passes through all the purple nodes. By Lemma~\ref{lem:white-to-purple}, in two steps $u$ can reach at least two distinct purple nodes on this path.  This implies that it can reach every purple node in at most $2\Delta-2$ steps. Since $v$ can reach a purple node in at most two steps, $d(u,v)\leq (2\Delta-2)+2=2\Delta$.
%
\end{itemize}
This finishes the proof of the upper bound of Theorem~\ref{thm:2disks}.

\subparagraph{The lower bound.}
\begin{figure}
    \centering
    \includegraphics[width=\linewidth]{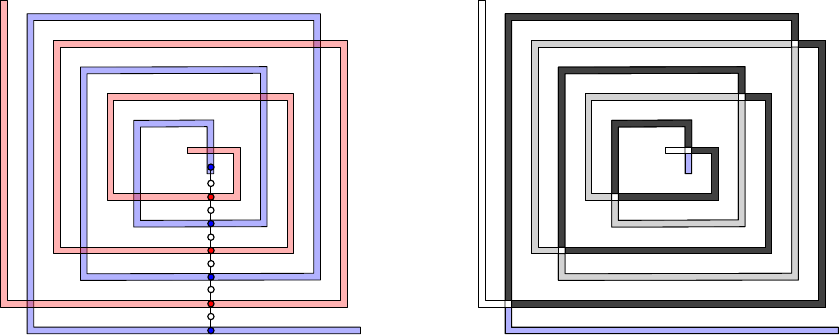}
    \caption{Left: Two spiral-shaped topological disks with overlap number~$\Delta=6$.
             The path shown in the figure is a shortest path in the dual graph between 
             the nodes corresponding to the innermost blue face and the outermost blue face. 
             Its length is~$2\Delta=12$.  
             Right: Any path between the innermost and the outermost blue face must cross each
             of the black and gray rings.}
    \label{fig:lower-bound-two-disks}
\end{figure}
The dual graph $\dg$ of two disjoint disks has diameter~2.
Therefore, to prove the lower bound, it suffices to construct an arrangement of two disks with the overlap number $\Delta\geq 2$ whose dual graph has diameter~$2\Delta$.
Fig.~\ref{fig:lower-bound-two-disks} exhibits such an arrangement for $\Delta=6$. 
Each spiral consists of $4k$ thin corridors, two vertical and two horizontal.
In the figure $k=3$, but the example easily extends to larger~$k$. 
Note that when increasing $k$ by~1, we increase $\Delta$ by~2 as each vertical corridor of the blue disk intersects a horizontal corridor of the red disk.
Thus, in general, $k=\Delta/2$. In the arrangement, we can identify $2k-1=\Delta-1$ rings, each consisting of a red and a blue face at equal distance from the innermost blue face; see the right image in Fig.~\ref{fig:lower-bound-two-disks}, where the rings are shown in black and gray. 
Going from the innermost blue face to the innermost ring requires two hops, going from a ring to the next ring requires two hops, and going from the outermost ring to the outermost blue face requires two hops.
Thus, the distance between the innermost and outermost blue face is~$2\Delta$.

This construction gives an arrangement of two disks with overlap number $\Delta>1$ whose dual graph has diameter~$2\Delta$, for even $\Delta\geq 2$.
By omitting the last two corridors of both disks, we obtain such an arrangement for odd~$\Delta>2$ as well.

\section{The general case of \texorpdfstring{$n$}{n} disks}
\label{sec:general}
We now consider the case of general $n$, which turns out to be significantly harder 
than the case of two disks. The main difficulty lies in bounding the number
of maximal faces; for two disks this number is simply the overlap
number $\Delta$ (assuming the disks are not disjoint), but for general $n$ it
is not even clear that the number of maximal faces {(or the number of maximum faces,
for that matter)} is bounded. Our main effort
lies in proving that this is the case (Section \ref{sec:bound-max-faces}). Once we have shown that, establishing
a bound on the diameter of the dual graph of the arrangement of disks is relatively easy (Section \ref{sec:bounddiamdualgraph}).

\subsection{The proof of Theorems~\ref{thm:maximum-faces} and~\ref{thm:mf-general}} \label{sec:bound-max-faces}
Recall that $\mf(A)$ and $M(\A)$ denote the number of maximal and maximum faces,
respectively, in the arrangement~$\A$ defined by the set $\D$ of $n$ disks.  
We first prove our bound on the number of maximum faces.
\maximumFaces*
\begin{proof}
Clearly, $M(2,\Delta)=\Delta$ by definition. We will show that for $n\geq 3$ we have
\begin{equation} \label{eq:max}
M(n,\Delta) \leq M(n-1,\Delta) + 4(n-1)(\Delta-1).
\end{equation}
This implies the theorem, since
\[
\begin{array}{lll}
 M(n,\Delta) &  \leq & M(2,\Delta) + \sum_{j=3}^{n} 4(j-1)(\Delta-1) \\
     & = &  \Delta + \sum\limits_{j=3}^{n} 4(j-1)(\Delta-1) \\
     & < & \sum\limits_{j=2}^{n} 4(j-1)\Delta  \mbox{ \mdb{We could be more precise, but I prefer the shorter formula.}}  \\
     & = &  2n(n-1)\Delta. 
\end{array}
\]
To prove that Inequality~(\ref{eq:max}) holds, 
consider a set $\D$ of $n\geq 3$ disks with overlap number~$\Delta$, and let $D_0$ be an arbitrary
disk in~$\D$. Define $\D_1 := \D\setminus \{D_0\}$ and let $D_1,\ldots,D_{n-1}$ be the disks in~$\D_1$.
We will show that
\[
M(\A(\D)) \leq M(\A(\D_1)) + 4(n-1)(\Delta-1).
\]
Note that some faces do not change when we remove $D_0$ from $\D$, 
while other faces---the ones incident to $\bd D_0$---expand into larger faces. Now consider a 
maximum face $f$ of $\A(\D)$ and let $\pa(f)$ be the face in $\A(\D_1)$ that contains~$f$. 
Note that $\pa(f)$ is a maximum face
in~$\A(\D_1)$. We consider two types of maximum faces in~$\A(\D)$, 
illustrated in \autoref{fig:maxfaces}(i).
\begin{figure}
\begin{center}
\includegraphics{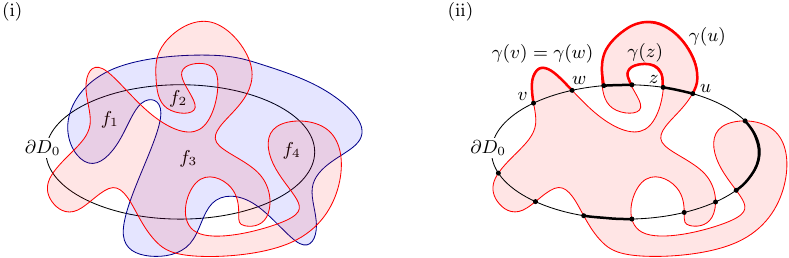}
\end{center}
\caption{(i) An arrangement of three disks. Faces $f_1$ and $f_4$ are safe, faces $f_2$ and $f_3$ are unsafe.
          (ii) The red disk defines four dangerous intervals along $\bd D_0$, indicated by thick lines.}
\label{fig:maxfaces}
\end{figure}
\begin{itemize}
\item A \emph{safe face} is a maximum face $f$ in $\A(\D)$ such that $f$ is the unique maximum
      face in $\A(\D)$ contained in $\pa(f)$.
\item An \emph{unsafe face} is a maximum face $f$ in $\A(\D)$ such that there is a maximum
      face $f'\neq f$ in $\A(\D)$ that is also contained in $\pa(f)$.
\end{itemize}
The number of safe faces is obviously at most $M(\D_1)$, so  
\[
\begin{array}{lll}
M(\D) & = & (\mbox{number of safe faces}) + (\mbox{number of unsafe faces}) \\
     & \leq & M(\D_1) + (\mbox{number of unsafe faces}).
\end{array}
\]
Note that the inequality is strict, unless there are no unsafe faces. Indeed, any
set of unsafe faces with the same parent face already contributes~1 to $M(\D_1)$.
Next, we bound the number of unsafe faces. 

Consider the arrangement $\A(\{D_0,D_i\})$ induced by $D_0$ and~$D_i$.
Any vertex~$v$ of this arrangement is incident to two edges of $\A(\{D_0,D_i\})$
contained in~$\bd D_i$, one inside $D_0$ and one outside $D_0$. We call the latter edge
the \emph{outside curve} of~$v$ and we denote it by~$\gamma(v)$.
Let $\I(D_i) := D_i \cap \bd D_0$ be the (possibly empty) set of intervals on $\bd D_0$ covered by~$D_i$.
Consider an interval $I\in \I(D_i)$ and let $v$ and $w$ be its endpoints. 
We call the interval~$I$ \emph{dangerous} if $\gamma(v)\neq \gamma(w)$;
see \autoref{fig:maxfaces}(ii) for an example.
\begin{claim} \label{claim:dangerous}
Any disk $D_i\in\D_1$ defines at most $2\Delta-2$ dangerous intervals.
\end{claim}
\begin{claimproof}
Assume there is at least one dangerous interval, otherwise the claim holds trivially. 
Let $F_i$ be the set of faces of $\A(\{D_0, D_i\})$ that are incident
to a dangerous interval, let $F_i^{\myin} \subset F_i$ be the faces
that lie inside~$D_0$ and let $F_i^{\myout} \subset F_i$ be the faces
that lie outside~$D_0$. Let $\G$ be the graph whose nodes are the faces in $F_i$ 
and where there is an edge between two faces if they share a dangerous interval;
see \autoref{fig:dual-of-Ai}.
\begin{figure}
\begin{center}
\includegraphics{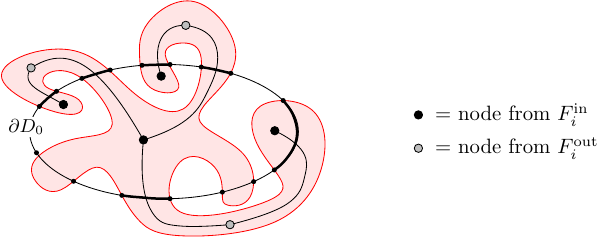}
\end{center}
\caption{The graph~$\G$ used in the proof of \autoref{claim:dangerous}.}
\label{fig:dual-of-Ai}
\end{figure}
$\G$ is a bipartite graph with node sets $F_i^{\myin}$ and $F_i^{\myout}$,
and that $|F_i^{\myin}|\leq \Delta$ by definition of the overlap number.
Moreover, any node~$f\in F_i^{\myout}$ has degree at least two in~$\G$.
Indeed, if $f$ had degree one, then the face~$f$ would be adjacent to a single
dangerous interval, which is impossible by the definition of dangerous intervals.
Note that $\G$ is a tree, since $D_i$ is a topological disk.

We now view $\G$ as a rooted tree, where we pick an arbitrary node~$f\in F_i^{\myout}$ as the root. 
Since the nodes in $F_i^{\myout}$ have degree at least~2 in~$\G$, they are not leaves. 
To bound $|F_i^{\myout}|$, we charge each node in $F_i^{\myout}$ to an arbitrary one of its children.
Thus we charge each node in $F_i^{\myout}$ to a node in~$F_i^{\myin}$ in such a way
that each node in $F_i^{\myin}$ is charged at most once. 
Since the root node has at least two children,
one of which will not be charged, we conclude that
\[
|F_i^{\myout}| \leq |F_i^{\myin}| - 1 \leq \Delta -1.
\]
Hence, the total number of nodes in $\G$ is at most $2\Delta-1$,
and so the number of edges in~$\G$, which equals the number of dangerous intervals,
is at most $2\Delta-2$.
\end{claimproof}
We call an edge of a face $f$ of $\A(\D)$ a \emph{boundary edge} of $f$ if it is contained in~$\bd D_0$.
The next claim relates the number of dangerous intervals to the number of unsafe faces.
\begin{claim}\label{claim:dangerous-to-unsafe}
Let $f$ be an unsafe face. Then $f$ has a boundary edge~$e$ such that 
at least one of the endpoints of~$e$ is an endpoint of a dangerous interval.
\end{claim}
\begin{claimproof}
Let $f'\neq f$ be a maximum face in $\A(\D)$ that is contained in~$\pa(f)$;
such a face exists because $f$ is unsafe.
Then there is a curve $\xi$ in the interior of $\pa(f)$ that connects $f$ to~$f'$.
(More precisely, $\xi$ connects a point in the interior of~$f$ to a point in the interior of~$f'$.)
Since $f$ and $f'$ are distinct faces in~$\A(\D)$, this curve~$\xi$
must cross a boundary edge, $e$, of~$f$. 
We can assume without loss of generality that $\xi$ crosses $e$ once.
Let $v$ and $w$ be the endpoints of~$e$, and assume for
a contradiction that neither $v$ nor $w$ is an endpoint of a dangerous interval.
Let $\gamma(v)$ and $\gamma(w)$ be the outside curves of $v$ and $w$, respectively, 
and let $D_i$ and $D_j$ be such that $\gamma(v)\subset \bd D_i$ and $\gamma(w)\subset \bd D_j$
We consider two cases.
\begin{figure}[b]
\begin{center}
\includegraphics{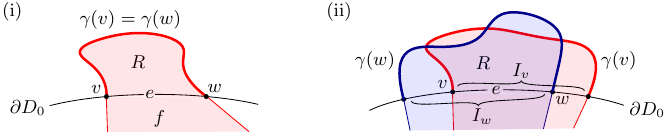}
\end{center}
\caption{Illustration for the proof of \autoref{claim:dangerous-to-unsafe}.}
\label{fig:unsafe-is-dangerous}
\end{figure}
\begin{itemize}
\item If $i=j$ then $\gamma(v)=\gamma(w)$, because we assumed that
      neither $v$ nor $w$ is an endpoint of a dangerous interval.
      Hence, $f'$ is contained in the region $R$ enclosed by $\gamma(v)\cup e$;
      see \autoref{fig:unsafe-is-dangerous}(i).
      But $R$ lies fully outside~$D_0$, which contradicts that $f'$ is a maximum face in~$\A(\D)$.
\item If $i\neq j$ then $\gamma(v)$ must intersect $\gamma(w)$. To see this,
      consider the interval $I_v \subset \bd D_0\cap D_i$ with endpoint $v$ and containing~$e$,
      and the interval $I_w \subset \bd D_0\cap D_j$ with endpoint $w$ and containing~$e$.
      Then $w\in I_v$ and $v\in I_w$. Together with the fact that $I_v$ and $I_w$ are 
      not dangerous by assumption, this implies that $\gamma(v)$ indeed intersects $\gamma(w)$;
      see \autoref{fig:unsafe-is-dangerous}(ii). 
      Let $R$ be the bounded face in the arrangement defined by $e$, $\gamma(v)$
      and $\gamma(w)$ that is incident to $e$. Then $f'$ is contained in $R$, 
      which lies outside $D_0$, contradicting that $f'$ is a maximum face in~$\A(\D)$.
\end{itemize}
We derived a contradiction in both cases, thus finishing the proof of the claim.
\end{claimproof}
\autoref{claim:dangerous-to-unsafe} allows us to charge each unsafe face to a dangerous
interval in such a way that each unsafe interval is charged at most once for each of its endpoints.
By \autoref{claim:dangerous}, the total number of dangerous intervals is at most~$(n-1)(2\Delta-2)$.
Hence, the number of unsafe faces is at most~$2(n-1)(2\Delta-2)$.
This finishes the proof of Inequality~(\ref{eq:max}) and, hence, of the theorem.
\end{proof}
The construction in \autoref{fig:max-faces-lower-bound} shows that $M(n,\Delta) =\Omega(n^2 \Delta)$.
\begin{figure}
\begin{center}
\includegraphics[width=\linewidth]{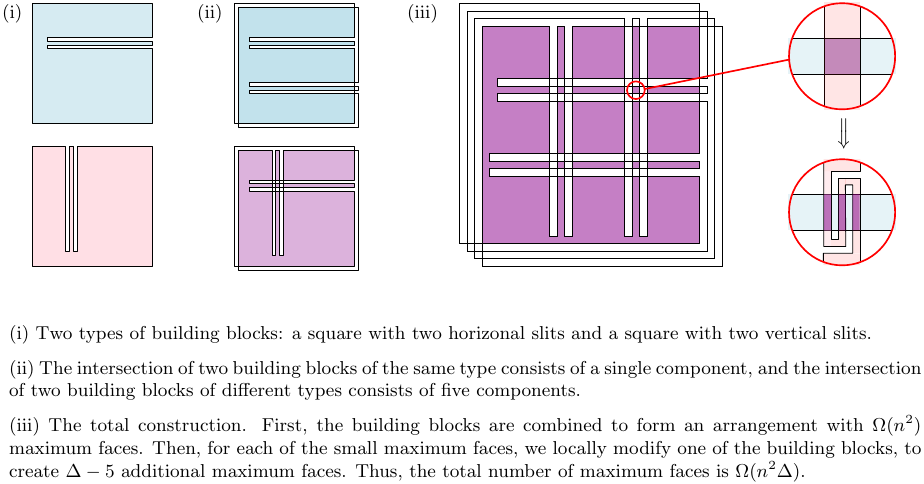}
\end{center}
\caption{A construction showing that $M(n,\Delta) =\Omega(n^2 \Delta)$.}
\label{fig:max-faces-lower-bound}
\end{figure}
\medskip

\autoref{thm:maximum-faces} implies the following bound on the number of maximal faces.

\mfgeneral* 
\begin{proof}
We claim that for any $n\geq 2$ we have $\mu(n,\Delta) \leq (2^n-1) \cdot M(n,\Delta)$.
Together with \autoref{thm:maximum-faces}, this implies the theorem.

To prove the claim, let $\D$ be a collection of $n$ topological disks with overlap number~$\Delta$,
and let $f$ be a maximal face of $\A(\D)$. Then $f$ is a maximum face in the
arrangement $\A(\D_f)$, where $\D_f\subseteq \D$ is the set of disks containing~$f$.
There are at most $M(|\D_f|,\Delta)$ such faces, which we charge to~$D_f$.
Hence,
\[
\mu(n,\Delta) \leq \sum_{\emptyset\neq\D'\subseteq \D} M(|\D'|,\Delta) \leq \sum_{\emptyset\neq\D'\subseteq \D} M(n,\Delta) = (2^n-1)  \cdot M(n,\Delta), 
\]
as claimed. For the second inequality, note that $M(n,\Delta)$ is non-decreasing in~$n$.
\end{proof}

\subsection{The proof of Theorem~\ref{thm:diam-general}}\label{sec:bounddiamdualgraph}
Using the bound on the number of maximal faces in the arrangement~$\A$, we now obtain a bound on the diameter of the dual graph~$\dg$ of~$\A$.
  
Recall that the \emph{ply} of a face $f$ in the arrangement $\A$, denoted by~$\ply(f)$, 
is the total number of disks in $\D$ that contain~$f$. 
Define $p_{\max} := \max_f \ply(f)$ to be the maximum ply over  all faces in the arrangement~$\A$. 
For a node $u$ in the dual graph $\dg$, we define $\ply(u) := \ply(f_u)$, where $f_u$ is the face in $\A$ corresponding to~$u$.
The next lemma relates $\mf(\A)$, the number of maximal faces in~$\A$, to $\diam(\dg)$, the diameter of the dual graph~$\dg$ of~$\A$. 
Combining the lemma with Theorem~\ref{thm:mf-general} proves Theorem~\ref{thm:diam-general}.
\begin{lemma}  \label{lem:max-to-diam}
Let $\A$ be an arrangement induced by a set $\D$ of $n$ disks, and let $\dg$ be the dual graph of~$\A$.
Then $\diam(\dg) \leq 2 p_{\max} \cdot \mf(\A) \leq 2n \cdot \mf(\A)$.
\end{lemma}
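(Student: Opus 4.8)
The plan is to exploit the fact that, because the disks are in general position, every edge of $\A$ lies on exactly one boundary $\bd D_i$, so crossing an edge enters or leaves a single disk and leaves membership in all other disks unchanged; hence two neighboring faces differ in ply by exactly $1$. First I would record two easy consequences: a face lies in at most $n$ disks, so $p_{\max}\le n$ and it suffices to prove the first inequality; and, since the faces tile the plane, $\dg$ is connected (a generic curve between two faces crosses only edges, yielding a walk in $\dg$). The engine of the proof is a \emph{gradient-ascent} map $\phi$. If a face $f$ is not maximal, some neighbor has ply at least $\ply(f)$, hence by the $\pm 1$ property exactly $\ply(f)+1$; stepping repeatedly to such a neighbor strictly increases the ply, which is capped by $p_{\max}$, so after at most $p_{\max}-\ply(f)$ steps we arrive at a maximal face. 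Fixing a tie-breaking rule turns this into a well-defined map $\phi$ from faces to maximal faces with $d(f,\phi(f))\le p_{\max}-\ply(f)\le p_{\max}$.

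Next I would set up the reduction. For any two faces $u,v$, the triangle inequality gives $d(u,v)\le d(u,\phi(u))+d(\phi(u),\phi(v))+d(\phi(v),v)\le 2p_{\max}+d(\phi(u),\phi(v))$, where the two outer terms are bounded using $\ply\ge 0$. Everything therefore comes down to bounding the distance between two maximal faces by $2p_{\max}\bigl(\mf(\A)-1\bigr)$, after which the three contributions sum to exactly $2p_{\max}\,\mf(\A)$.

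The crux --- and the step I expect to be the main obstacle, since the target constant leaves no slack and a naive estimate loses an additive $+1$ per maximal face --- is a \emph{sharp} bound on $d(\phi(a),\phi(b))$ for two \emph{neighboring} faces $a,b$. Here I would write $d(\phi(a),\phi(b))\le d(a,\phi(a))+1+d(b,\phi(b))\le (p_{\max}-\ply(a))+1+(p_{\max}-\ply(b))$; the decisive observation is that $a$ and $b$ differ in ply by exactly $1$, so $\ply(a)+\ply(b)\ge 1$, which collapses the right-hand side to at most $2p_{\max}$. I would then form the auxiliary graph $H$ whose vertices are the $\mf(\A)$ maximal faces, joining two of them whenever they are the $\phi$-images of some neighboring pair $a,b$ in $\dg$. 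Since $\phi$ is surjective onto the maximal faces and $\dg$ is connected, projecting a connecting path shows that $H$ is connected, so any two maximal faces are linked in $H$ by at most $\mf(\A)-1$ edges, each contributing at most $2p_{\max}$ in $\dg$ by the computation above; thus maximal faces are pairwise within $2p_{\max}\bigl(\mf(\A)-1\bigr)$. Substituting into the reduction yields $\diam(\dg)\le 2p_{\max}\,\mf(\A)\le 2n\,\mf(\A)$, as required. The only additional care needed is in verifying the $\pm 1$ ply property and the connectivity of $\dg$, both of which follow directly from general position, and in noting that a global ply-maximum face is automatically maximal, so $\mf(\A)\ge 1$ and the outer bound is never vacuous.
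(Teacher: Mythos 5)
Your proposal is correct and follows essentially the same route as the paper: the paper partitions the faces according to which maximal face they reach by a ply-increasing (monotone) path, contracts each class to get a connected graph on the $\mf(\A)$ maximal faces, and bounds each contracted edge by $2p_{\max}$ using exactly your observation that two adjacent faces cannot both have ply zero. Your map $\phi$ and auxiliary graph $H$ are just a different packaging of that partition-and-contraction argument, and the arithmetic giving $2p_{\max}\cdot\mf(\A)$ is identical.
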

\begin{proof}
Let $V$ be the set of nodes of~$\dg$, and let $M=\{v_1,\ldots,v_t\}$ be the subset of nodes from $V$ corresponding to maximal faces in~$\A$. 
We say that a node $u\in V$ has a \emph{monotone path} to a node $v_i\in M$ if there is a path $u=w_1,\ldots,w_s=v_i$ in $\dg$ such that $\ply(w_{j+1})=\ply(w_j)+1$ for all $1\leq j<s$. 
Observe that a node $v_i\in M$ has a monotone path to itself and to no other node~$v_j\in M$.
We partition $V$ into subsets $V_1,\ldots,V_t$, one for each node $v_i\in M$, where we define
\[
V_i := \{ u\in V : \mbox{$u$ has a monotone path to $v_i$ but not to any $v_j$ with $j<i$} \}.
\]
Note that each node $u\in V$ is assigned to a set~$V_i$, because from $u$ we can greedily move to faces of higher ply until we reach a maximal face. 
Also note that $v_i\in V_i$ for all~$i$. 

Now, let $\overline{\dg}$ be the graph obtained from $\dg$ by contracting each 
subgraph $\dg[V_i]$ to a single node. 
We make the following two observations.
\begin{enumerate}[(i)]
\item  $\overline{\dg}$ is a connected graph (because $\dg$ is obviously connected) with $\mf(\A)$ nodes. Hence, the distance between any two nodes $V_i,V_j$ in $\overline{\dg}$ is at most $\mf(\A)-1$.
\item Let $v_i,v_j\in M$ be such that the nodes in $\overline{\dg}$ corresponding to the sets~$V_i$ and $V_j$ are adjacent. Then, the distance $d(v_i,v_j)$ between the nodes $v_i$ and $v_j$ in $\dg$ is at most $\ply(v_i)+\ply(v_j)$. To see this, let $w_i\in V_i$ and $w_j\in V_j$ be adjacent nodes in~$\dg$. Such nodes must exist because $V_i$ and $V_j$ are adjacent in~$\overline{\dg}$. From $w_i$ we can walk to $v_i$ in $\ply(v_i)-\ply(w_i)$ hops and from $w_j$ we can walk to $v_j$ in $\ply(v_j)-\ply(w_j)$ hops. Hence, 
      \[
      d(v_i,v_j) \leq \big( \ply(v_i)-\ply(w_i) \big) + \big( \ply(v_j) - \ply(w_j) \big) + 1 \leq \ply(v_i) + \ply(v_j),
      \]
      where the last inequality follows because $\ply(w_i)$ and $\ply(w_j)$ cannot both be zero.
\end{enumerate}
Now, let $u,u'$ be two arbitrary nodes in~$V$. 
Let $i,j$ be such that $u\in V_i$ and $u'\in V_j$. 
We can walk from $u$ to $v_i$ in $\ply(v_i)-\ply(u)\leq p_{\max}$ hops, and we can walk from $u'$ to $v_j$ in $\ply(v_j)-\ply(u')\leq p_{\max}$ hops.
Moreover, by combining the two observations above, we see that there is a path from $v_i$ to $v_j$ in $\dg$ consisting of $(\mf(\A)-1) \cdot 2 p_{\max}$ edges. 
Hence, $d(u,u') \leq 2 p_{\max} \cdot \mf(\A)$. 
It remains to observe that $p_{\max}\leq n$, which we trivially have.
\end{proof}

\section{Concluding Remarks and Discussion}
In this paper, we derived bounds on the diameter of the dual graph of an arrangement~$\A$ 
of disks in terms of the number of disks, $n$, and the maximum number of intersection 
components between any two disks, $\Delta$. {(The bound on the diameter
is equivalent to a bound on the number of crossings of 
any minimum-crossing $st$-curve in arrangements of two disks.)}

For an arrangement containing only two disks, 
we established a sharp linear bound of \(\max\{2,2\Delta\}\). 
\deleted{Such an upper bound is equivalent to a bound on the number of crossings of 
any minimum-crossing $st$-curve in arrangements of two disks.} 
For an arrangement of $n$ disks, we bounded \deleted{both} the diameter of the dual graph
{by $O(n^3 2^n\Delta)$. This was done by establishing an $O(n^2 2^n\Delta)$ bound on the 
number of maximal faces in~$\A$. The latter was done by first proving an $O(n^2\Delta)$ bound on the number
of maximum faces, which is asymptotically tight.}
\deleted{and the number of maximal faces as a function of $n$ and $\Delta$. The bound on the number of maximal faces $\mf(\A)$ is $n(\Delta+1)^{n(n-1)/2}$. Therefore the upper bound on the diameter of the dual graph of arrangements with $n$ disks is $2 n(\Delta+1)^{n(n-1)/2} \min\{n,\Delta+1\}$, which corresponds to the bound for the number of crossings of any minimum-crossing $st$-curve.} 

Our bound on the number of maximal faces is far from tight: our best lower bound 
is the same as for the number of maximum faces, namely $\Omega(n^2\Delta)$, while our upper bound
is $O(n^2 2^n\Delta)$. The main open problem is to tighten this gap. We believe
that the true bound is polynomial in~$n$---in fact, we conjecture that the $\Omega(n^2\Delta)$ lower bound is asymptotically tight.
The upper bound on the number of maximal faces was used to derive a bound on the diameter
of the dual graph of the arrangement. Notice that by making $n/2$ nested copies of each of the two disks in 
\autoref{fig:lower-bound-two-disks},  we can produce an arrangement of $n$ disks with overlap $\Delta$ and diameter~$\Omega(n \Delta)$.
Instead of trying to improve the $O(n^3 2^n\Delta)$ upper bound by improving the bound on the number
of maximal faces, one may also try to prove a better bound directly.



\bibliography{references}

\appendix

\end{document}